\newcommand{\z}{\mathbb{Z}}
\newcommand{\n}{\mathbb{N}}
\newtheorem{thm}{Theorem}
\newtheorem{lem}{Lemma}
\theoremstyle{remark}
\newtheorem*{rem}{Remark}
\theoremstyle{definition}
\author{Jingjing Huang}
\address{
Department of Mathematics, McAllister Building,
Pennsylvania State University, University Park, PA 16802-6401,
U.S.A.}
\email{huang@math.psu.edu}
\thanks{}
\keywords{Diophantine equations, mean value theorem}
\subjclass[2010]{Primary 11D45, Secondary 11D09}
\begin{document}

\title
[$axy-x-y=n$]
{A Mean Value Theorem for the Diophantine Equation $axy-x-y=n$}

\begin{abstract}
In this paper, we prove an asymptotic formula for the average number of solutions to the Diophantine equation $axy-x-y=n$ in which $a$ is fixed and 
$n$ varies. 
\end{abstract}
\maketitle

\section{Introduction} 

\noindent People has been considering Diophantine equations involving products and sums of some variables for a long time.
The Diophantine equation
\begin{equation} \label{e10}
\prod_{i=1}^k x_i-\sum_{i=1}^k x_i=n
\end{equation}
was studied by various people during the past a few decades. It is easy to see that there always exists a few trivial solutions with most of $x_i$'s equal to 1. So people are asking about the number of solutions of this equation with all $x_i>1$. 
\par
The case when $n=0$ is very special, since it concerns the number of $k$-tuples with equal sum and product. In this case, it is conjectured by Misiurewicz \cite{mi} that $k=2, 3, 4, 6, 24, 114, 174$ and 444 are the only values of $k$ for which there are only trivial solutions. For general $n$, very little is known except that in 1970s Viola \cite{vi} proved that if $E_k(N)$ denotes the number of positive integers $n\le N$ for which \eqref{e10} is not soluble in integers $x_1, x_2, \ldots, x_k>1$ then $E_k(N)=N \exp(-c_k(\log N)^{1-1/(k+1)})$ for some positive constant $c_k$. It is believed that for large $n$ equation \eqref{e10} always has a nontrivial solution, which nevertheless is an open question in this area. 
\par
On the other hand, the case that $k=3$ has received extensive attention, and several variations of this problem were studied. Brian Conrey asked whether the number of solutions in positive integers to the equation
\begin{equation}\label{e11}
xyz+x+y=n
\end{equation}
can be bounded by $O_\varepsilon(n^\varepsilon)$ for any $\varepsilon>0$. Kevin Ford posed a generalisation of this problem, in which one would like to show that there are $O_\varepsilon(|AB|^\varepsilon)$ nontrivial positive integer solutions to the equation 
\begin{equation}\label{e12}
xyz=A(x+y)+B
\end{equation} 
for given nonzero $A, B\in\z$. 
\par
In this paper, we consider another variation of the case that $k=2$, namely the following equation
\begin{equation} \label{e1}
axy-x-y=n
\end{equation}
where $a$ is a positive integer and $n$ is any nonnegative integer. This can be viewed as equation \eqref{e12} in which $z$ is fixed and $A=1$. Hence if the number of solutions of equation \eqref{e1} is well understood, then one can probably understand the number of solutions of equation \eqref{e12} simply by averaging over $a$. 
\par
Let $$R_a(n)=\rm{Card}\left\{(x,y)\in\n^2:axy-x-y=n\right\}.$$ 
Here we are considering the number of positive integer solutions of equation \eqref{e1} when $a$ is fixed and $n$ varies. A sharp asymptotic formula is established in this paper on the average of $R_a(n)$ over $n$. Notice the case that $a=1$ is trivial, since then $R_1(n)=d(n+1)$ is just the divisor function of $n+1$, the average of which is relatively well understood. 

\begin{thm} \label{t1} For positive integers $a>1$ and $N\ge 1$, we have
$$\sum_{0\leq n\leq N}R_a(n)=\frac1a\Big(N\log N-C(a)N\Big)+\Delta_a(N)$$
where
\begin{equation} \label{e13}
C(a)=2\frac{\Gamma'{(\frac{a-1}a)}}{\Gamma(\frac{a-1}a)}+2\sum_{p|a} \frac{\log p}{p-1}+\log a+2\gamma+1
\end{equation} 
and
\begin{equation} \label{e14}
\Delta_a(N)\ll{\phi(a)}\sqrt{\frac{N}a}\Big(\log (aN)\Big)^2.
\end{equation}
Here $\Gamma(s)=\int_0^{+\infty}e^{-t}t^{s-1}d t$ is the standard $\Gamma$ function, and $\gamma$ is the Euler constant.
\end{thm}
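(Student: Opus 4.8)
The plan is to convert the equation into a divisor problem in an arithmetic progression. Multiplying $axy-x-y=n$ by $a$ and completing the product gives $(ax-1)(ay-1)=an+1$, so setting $u=ax-1$ and $v=ay-1$ shows that $R_a(n)$ counts the factorizations $an+1=uv$ with $u\equiv v\equiv -1\ (\mathrm{mod}\ a)$; since $an+1\equiv 1\ (\mathrm{mod}\ a)$, the condition $u\equiv -1$ already forces $v\equiv -1$, so in fact $R_a(n)=\#\{d\mid an+1:\ d\equiv -1\ (\mathrm{mod}\ a)\}$. Summing over $n$ and writing $X=aN+1$, the quantity to estimate becomes the lattice-point count
\[
\sum_{0\le n\le N}R_a(n)=\#\{(u,v)\in\n^2:\ u\equiv v\equiv -1\ (\mathrm{mod}\ a),\ uv\le X\},
\]
a Dirichlet hyperbola problem restricted to a single residue class.

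First I would attack this by the symmetric hyperbola method. With $G(t)=\#\{v\le t:\ v\equiv -1\ (\mathrm{mod}\ a)\}=t/a+O(1)$, the count equals $2\sum_{u\le\sqrt X,\ u\equiv -1(a)}G(X/u)-G(\sqrt X)^2$, whose main term is $\tfrac{2X}{a}\sum_{u\le\sqrt X,\ u\equiv -1(a)}\tfrac1u-\tfrac{X}{a^2}$. The leading $\tfrac1a\log\sqrt X$ of the inner harmonic sum produces the $\tfrac1a N\log N$ term, and its secondary constant is where $C(a)$ is born. Evaluating $\sum_{u\le Y,\ u\equiv -1(a)}1/u$ through the Hurwitz zeta function $a^{-s}\zeta(s,\tfrac{a-1}{a})$, the constant term of the Laurent expansion at $s=1$ is $-\tfrac1a\big(\log a+\psi(\tfrac{a-1}{a})\big)$, where $\psi=\Gamma'/\Gamma$; this is exactly what feeds the $\Gamma'/\Gamma$ and $\log a$ pieces of $C(a)$ after substituting $X=aN+1$ and $\log X=\log a+\log N+O(1/N)$.

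To expose the $\gamma$ and $\sum_{p\mid a}\frac{\log p}{p-1}$ contributions explicitly, I would rerun the computation through Dirichlet characters mod $a$, writing $\mathbf 1_{d\equiv -1(a)}=\phi(a)^{-1}\sum_\chi\bar\chi(-1)\chi(d)$. The principal character contributes $\phi(a)^{-1}\sum_{m\le X,\ m\equiv 1(a)}d(m)$, whose Dirichlet series $\zeta(s)^2\prod_{p\mid a}(1-p^{-s})^2$ has a double pole at $s=1$; the Laurent data $\zeta(1+w)^2=w^{-2}+2\gamma w^{-1}+\cdots$ and $\tfrac{d}{ds}\log\prod_{p\mid a}(1-p^{-s})^2\big|_{s=1}=2\sum_{p\mid a}\frac{\log p}{p-1}$ supply precisely those constants, while the non-principal characters contribute the values $L(1,\chi)$, which by the Gauss digamma formula recombine with the principal-character constants into the stated closed form. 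The two viewpoints must agree, and cross-checking them (keeping careful track of every cancellation) is the cleanest way to pin down $C(a)$; the case $a=2$, where only the principal character survives and $R_2(n)=d(2n+1)$, is a convenient independent check.

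The main obstacle is the error term $\Delta_a(N)$ and, above all, its uniformity in $a$. In the hyperbola formulation this reduces to bounding the remainder sum $\sum_{u\le\sqrt X,\ u\equiv -1(a)}\big(G(X/u)-X/(au)\big)$ together with the $G(\sqrt X)^2$ term; in the character formulation it is the divisor problem in progressions for each non-principal $\chi$, where summing over the $\phi(a)$ characters costs the factor $\phi(a)$, the truncation at $\sqrt X=\sqrt{aN+1}$ produces the $\sqrt{N/a}$, and the squared divisor structure produces $(\log aN)^2$, together giving $\Delta_a(N)\ll\phi(a)\sqrt{N/a}(\log aN)^2$. Keeping every implied constant independent of $a$ throughout—particularly in the tails of the Hurwitz/digamma expansion and in the character-sum estimates—is the delicate part of the argument.
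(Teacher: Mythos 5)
Your opening reduction --- rewriting the equation as $(ax-1)(ay-1)=an+1$ and counting pairs $u\equiv v\equiv -1\pmod a$ with $uv\le X=aN+1$ --- is exactly the paper's, but from there your route is genuinely different. The paper detects both congruences with a double character sum, applies a quantitative Perron formula to $L(s,\chi_1)L(s,\chi_2)$, shifts the contour to $\Re s=\tfrac12$, and controls the shifted integral with convexity bounds and the mean-value estimate of its Lemma 4; the constant then emerges from residues at $s=1$ together with its Lemmas 5 and 6. You instead run the elementary Dirichlet hyperbola method with $G(t)=t/a+O(1)$ and the harmonic sum over the progression (your Hurwitz-zeta evaluation is precisely the paper's Lemma 6). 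Your route is sound, simpler, and in fact stronger where the paper works hardest: the hyperbola sum has $O(\sqrt X/a+1)$ terms, each contributing an $O(1)$ fractional-part error, so you get $\Delta_a(N)\ll\sqrt{N/a}+1$, noticeably better than \eqref{e14}.

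The problem is the constant. Carried out, your hyperbola computation gives main term $\frac{X}{a^2}\bigl(\log X-2\psi(\tfrac{a-1}{a})-2\log a-1\bigr)$, i.e. $C(a)=2\psi(\tfrac{a-1}{a})+\log a+1$, with no $2\gamma$ and no $2\sum_{p|a}\frac{\log p}{p-1}$. Your claim that rerunning the argument through characters makes those terms reappear and ``recombine into the stated closed form'' is wrong: in the character version the double pole of $L(s,\chi_0)^2M^s/s$ does produce $\frac{M}{a^2}\bigl(2\gamma+2\sum_{p|a}\frac{\log p}{p-1}\bigr)$, but this is exactly cancelled by the same quantity hidden inside $\frac{2}{\phi(a)}\sum_{\chi\ne\chi_0}\bar\chi(-1)L(1,\chi)$ (the paper's Lemma 5), so the two viewpoints agree on the smaller constant. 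The resulting discrepancy with \eqref{e13} is not your error but the paper's: in its case (i) the paper takes the residue to be $\prod_{p|a}(1-\tfrac1p)^2(M\log M-M)$, omitting the terms $\prod_{p|a}(1-\tfrac1p)^2\bigl(2\gamma+2\sum_{p|a}\frac{\log p}{p-1}\bigr)M$ that come from the $2\gamma/(s-1)$ term of $\zeta(s)^2$ and from differentiating the Euler factors; that omission is precisely the excess $2\gamma+2\sum_{p|a}\frac{\log p}{p-1}$ in \eqref{e13}. The check you yourself proposed settles this: for $a=2$ one has $R_2(n)=d(2n+1)$, and $\sum_{m\le x,\,m\ \mathrm{odd}}d(m)=\frac{x}{4}(\log x+2\gamma+2\log 2-1)+O(x^{1/2+\varepsilon})$, giving the true value $C(2)=1-2\gamma-3\log 2$, whereas \eqref{e13} gives $1-\log 2$. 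So if you follow your plan through to that cross-check, you will not confirm the stated theorem; you will prove it with the corrected constant $C(a)=2\psi(\tfrac{a-1}{a})+\log a+1$, and with a better error term.
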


In fact, since the error term above is roughly of size $\sqrt{aN}\Big(\log (aN)\Big)^2$, it is conceivable that the main term will be inferior to the error term when 
$a\gg N^{\frac{1}{3}}$. So in order for the above asymptotic formula to really make sense, one would impose a condition on $a$, such as 
$a\ll N^{\frac13}/\log N$.

Moreover, one can argue what is the right order of magnitude of the error $\Delta_a(N)$. In view of $R_1(n)=d(n+1)$, one can think $R_a(n)$ as a ``generalized" divisor function. Hence Theorem \ref{t1} just proves a mean value theorem for such a ``generalized" divisor function. Since for the classical divisor function, the error is believed to be $O(N^{1/4+\varepsilon})$. It is very natural to pose such a conjecture for our error $\Delta_a(N)$. The author suspects that following the van der Corput method on exponential sums as in the classical case, one can show $\Delta_a(N)=O_a(N^{1/3-\delta})$ for some $\delta>0$. 

\begin{rem}
It's not hard to adapt the method in this paper in order to deal with equations like
$$axy-bx-cy=n$$
and prove similar asymptotic formulas. 
\end{rem}

\section{Preliminary Lemmas}

We state several lemmas before embarking on the proof of Theorem 1.  The content of Lemma 2 
can be found, for example, in Corollary 1.17 and Theorem 6.7 of Montgomery \& Vaughan \cite{MV}, and Lemma 3 
can be deduced from Theorem 4.15 of Titchmarsh \cite{ti} with $x=y=(|t|/2\pi)^{1/2}$.
\newtheorem{lemma}[thm]{Lemma}
\begin{lem}
\label{l1}
When $\sigma\geq 1$ and $|t|\geq2$, we have $$\frac{1}{\log|t|}\ll \zeta(\sigma+it)\ll \log|t|.$$
\end{lem}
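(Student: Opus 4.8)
The plan is to prove the two inequalities separately, the upper bound being elementary and the lower bound carrying essentially all of the difficulty.

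For the upper bound $\zeta(\sigma+it)\ll\log|t|$ I would start from the standard truncated Euler--Maclaurin representation, valid for $\sigma>0$ and $s\ne1$,
$$\zeta(s)=\sum_{n\le N}\frac{1}{n^{s}}+\frac{N^{1-s}}{s-1}-s\int_{N}^{\infty}\frac{\{u\}-\tfrac12}{u^{s+1}}\,du+O\!\left(N^{-\sigma}\right),$$
and take $N=\lceil|t|\rceil$. The range $\sigma\ge2$ is trivial since there $\zeta(\sigma+it)=1+O(2^{-\sigma})$ is bounded, so I only need $1\le\sigma\le2$. On this strip the Dirichlet sum is $\le\sum_{n\le N}n^{-1}\ll\log N\ll\log|t|$; since $N^{1-\sigma}\le1$ and $|s-1|\ge|t|\ge2$, the second term is $O(1)$; and the tail integral is $\ll|s|N^{-\sigma}\ll|t|/N\ll1$. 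Summing these contributions gives $\zeta(\sigma+it)\ll\log|t|$ uniformly in $\sigma\ge1$.

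For the lower bound I would build on the nonnegativity $3+4\cos\theta+\cos2\theta=2(1+\cos\theta)^{2}\ge0$, which, fed into the series $\log\zeta(s)=\sum_{p,k}k^{-1}p^{-ks}$ and exponentiated, yields the classical inequality
$$\zeta(\sigma)^{3}\,|\zeta(\sigma+it)|^{4}\,|\zeta(\sigma+2it)|\ge1\qquad(\sigma>1).$$
When $\sigma$ is bounded away from the line, say $\sigma\ge1+c/\log|t|$, this combines at once with the upper bound $|\zeta(\sigma+2it)|\ll\log|t|$ and the elementary estimate $\zeta(\sigma)\ll(\sigma-1)^{-1}$ to give $|\zeta(\sigma+it)|\gg1/\log|t|$. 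The genuinely delicate range is $1\le\sigma\le1+c/\log|t|$, that is, essentially the line itself, where $\zeta(\sigma)$ is as large as $\log|t|$ and the inequality above degrades.

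The hardest step is therefore to transfer the estimate from $\sigma_{1}=1+c/\log|t|$ down to $\sigma=1$ without losing powers of $\log|t|$. Bounding the increment crudely by $(\sigma_{1}-1)\max|\zeta'|$ is too wasteful: with $\zeta'\ll(\log|t|)^{2}$ the optimal choice of $\sigma_{1}$ would only produce a bound of the shape $(\log|t|)^{-7}$, far short of the target. Instead I would invoke the classical zero-free region $\sigma>1-c/\log|t|$, which follows from the same three-term inequality applied to $-\zeta'/\zeta$ together with the Hadamard product, and the attendant estimate $\zeta'/\zeta(s)\ll\log|t|$ valid throughout that region. Integrating $\zeta'/\zeta$ along the short horizontal segment from $\sigma_{1}+it$ to $1+it$, of length $\asymp1/\log|t|$, then changes $\log|\zeta|$ by only $O(1)$, whence $|\zeta(1+it)|\asymp|\zeta(\sigma_{1}+it)|\gg1/\log|t|$. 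This control of the logarithmic derivative inside the zero-free region, rather than naive differentiation, is precisely what secures the sharp exponent and is the crux of the whole argument.
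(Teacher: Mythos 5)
Your proof is correct and is essentially the paper's approach: the paper gives no argument of its own for this lemma, citing Corollary 1.17 and Theorem 6.7 of Montgomery--Vaughan, and those results are proved exactly as you do it --- truncated Euler--Maclaurin for the upper bound, and for the lower bound the $3$-$4$-$1$ inequality combined with $\zeta'/\zeta(s)\ll\log|t|$ inside the zero-free region, rather than naive differentiation (which, as you correctly note, only yields a bound of shape $(\log|t|)^{-7}$). The one ingredient you quote without proof, namely $\zeta'/\zeta(s)\ll\log|t|$ for $\sigma\ge1$ via the Hadamard product and the zero-free region, is precisely the companion estimate established en route in the cited Theorem 6.7, so nothing in your argument is circular or missing.
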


\begin{lem}
\label{l2}
When $0\leq\sigma\leq1$ and $|t|\geq2$, we have
$$\zeta(\sigma+it)\ll |t|^\frac{1-\sigma}{2}\log(|t|).$$
\end{lem}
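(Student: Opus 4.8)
The plan is to deduce this convexity bound from the approximate functional equation for $\zeta(s)$, as the cited Theorem~4.15 of Titchmarsh indicates. Writing $s=\sigma+it$ and letting $\chi(s)$ be the factor in the functional equation $\zeta(s)=\chi(s)\zeta(1-s)$, that theorem gives, for $0\le\sigma\le1$ and any $x,y\ge1$ with $2\pi xy=|t|$,
$$\zeta(s)=\sum_{n\le x}n^{-s}+\chi(s)\sum_{n\le y}n^{s-1}+O\bigl(x^{-\sigma}\bigr)+O\bigl(|t|^{1/2-\sigma}y^{\sigma-1}\bigr).$$
I would make the balanced choice $x=y=(|t|/2\pi)^{1/2}$, which satisfies the constraint $2\pi xy=|t|$ and renders the two Dirichlet polynomials of comparable length; note $x\asymp|t|^{1/2}$.

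For the first sum I would bound trivially by absolute values, $\bigl|\sum_{n\le x}n^{-s}\bigr|\le\sum_{n\le x}n^{-\sigma}$, and then compare the latter with $\int_1^x u^{-\sigma}\,du$. Using the elementary inequality $e^u-1\le u\,e^u$ to control the factor $1/(1-\sigma)$ near $\sigma=1$, one gets $\sum_{n\le x}n^{-\sigma}\ll x^{1-\sigma}\log|t|$ uniformly for $0\le\sigma\le1$, which contributes $\ll|t|^{(1-\sigma)/2}\log|t|$. For the second term I would combine Stirling's formula, which yields $|\chi(s)|\asymp|t|^{1/2-\sigma}$ uniformly on the strip, with the analogous estimate $\bigl|\sum_{n\le y}n^{s-1}\bigr|\le\sum_{n\le y}n^{\sigma-1}\ll y^{\sigma}\log|t|$ (the degeneracy now sitting at $\sigma=0$). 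Multiplying gives $|t|^{1/2-\sigma}\cdot|t|^{\sigma/2}\log|t|\asymp|t|^{(1-\sigma)/2}\log|t|$, exactly the target order.

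It then remains to absorb the two error terms: with the balanced choice one computes $x^{-\sigma}=(|t|/2\pi)^{-\sigma/2}$ and $|t|^{1/2-\sigma}y^{\sigma-1}\asymp|t|^{-\sigma/2}$, both of which are $\ll|t|^{(1-\sigma)/2}$ since $|t|\ge2$. Collecting the four contributions yields $\zeta(\sigma+it)\ll|t|^{(1-\sigma)/2}\log|t|$. The genuine analytic input --- the approximate functional equation and the uniform Stirling asymptotics for $\chi$ --- is imported, so the main obstacle is not conceptual but the uniformity book-keeping: verifying that every sum and error estimate holds across the whole strip $0\le\sigma\le1$, and in particular that the single factor $\log|t|$ correctly covers both endpoint degeneracies ($\sigma\to1$ in the first sum, $\sigma\to0$ in the second). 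An alternative that avoids the approximate functional equation would be to invoke the Phragm\'en--Lindel\"of convexity principle, bounding $\zeta$ on $\sigma=1$ by Lemma~\ref{l1}, on $\sigma=0$ by the functional equation together with Lemma~\ref{l1}, and interpolating the growth exponents linearly from $1/2$ to $0$; tracking the logarithmic factors through that argument is slightly more delicate.
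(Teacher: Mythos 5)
Your proposal is correct and is precisely the paper's own route: the paper proves this lemma simply by citing the approximate functional equation (Theorem 4.15 of Titchmarsh) with the balanced choice $x=y=(|t|/2\pi)^{1/2}$, which is exactly the argument you carry out in detail. Your elaboration --- trivial estimation of the two Dirichlet polynomials with the $e^u-1\le ue^u$ device handling the endpoint degeneracies, Stirling giving $|\chi(s)|\asymp|t|^{1/2-\sigma}$, and absorption of the two $O$-terms --- fills in the book-keeping soundly, so there is nothing to correct.
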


\begin{lem}
\label{l3}
Let $\chi$ be a non-principle character modulo $a$ and $s=\sigma+it$ and assume that $t\in\mathbb R$.  Then
$$L(s,\chi)\ll \log(a(2+|t|)),\text{ when }\sigma\geq 1$$
and
$$L(s,\chi)\ll \left(a|t|\right)^{\frac{1-\sigma}{2}+\varepsilon},\text{ when }\frac{1}{2}\leq\sigma\leq1.$$
\end{lem}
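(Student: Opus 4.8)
The plan is to prove the two bounds by the standard devices behind Lemmas \ref{l1} and \ref{l2}: elementary partial summation when $\sigma\ge1$, and the approximate functional equation---the Dirichlet $L$-function analogue of Theorem 4.15 in Titchmarsh \cite{ti}---inside the critical strip. The key elementary input is that, since $\chi$ is non-principal, its partial sums are bounded.

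For $\sigma\ge1$, set $A(u)=\sum_{n\le u}\chi(n)$. Because $\chi$ is non-principal its sum over a complete residue system modulo $a$ vanishes, so $|A(u)|\le\phi(a)\le a$ for all $u$. Splitting the series at a parameter $X\ge1$ and summing the tail by parts,
$$L(s,\chi)=\sum_{n\le X}\frac{\chi(n)}{n^{s}}+s\int_{X}^{\infty}\frac{A(u)-A(X)}{u^{s+1}}\,du.$$
The head is at most $\sum_{n\le X}n^{-\sigma}\le\sum_{n\le X}n^{-1}\ll\log X$, and the tail is $\ll|s|\,a\,\sigma^{-1}X^{-\sigma}\ll a(2+|t|)\,X^{-\sigma}$. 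Taking $X=a(2+|t|)$ makes the tail $\ll X^{1-\sigma}\le1$, and leaves $L(s,\chi)\ll\log\big(a(2+|t|)\big)$, which is the first bound.

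For $\tfrac12\le\sigma\le1$ I would first reduce to a primitive character: if $\chi$ is induced by the primitive $\chi^{*}$ modulo $q\mid a$, then $L(s,\chi)=L(s,\chi^{*})\prod_{p\mid a}\big(1-\chi^{*}(p)p^{-s}\big)$, and the finite product is $\le 2^{\omega(a)}\ll a^{\varepsilon}$, so it suffices to bound $L(s,\chi^{*})$. For $\chi^{*}$ the functional equation writes $L(s,\chi^{*})=X(s)\,L(1-s,\bar\chi^{*})$ with $X(s)=\varepsilon(\chi^{*})(q/\pi)^{1/2-s}\Gamma(\tfrac{1-s+\mathfrak a}{2})/\Gamma(\tfrac{s+\mathfrak a}{2})$, where $|\varepsilon(\chi^{*})|=1$ and $\mathfrak a\in\{0,1\}$; Stirling's formula gives $|X(s)|\asymp(q|t|)^{1/2-\sigma}$. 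The engine is then the approximate functional equation in its symmetric form, with lengths set by the analytic conductor $q|t|$ exactly as Titchmarsh takes $x=y=(|t|/2\pi)^{1/2}$ for $\zeta$; here $x=y=(q|t|/2\pi)^{1/2}$ and
$$L(s,\chi^{*})=\sum_{n\le x}\frac{\chi^{*}(n)}{n^{s}}+X(s)\sum_{n\le y}\frac{\bar\chi^{*}(n)}{n^{1-s}}+E,$$
where $E$ is of smaller order than the two sums. The first sum is $\ll\sum_{n\le x}n^{-\sigma}\ll x^{1-\sigma}\asymp(q|t|)^{(1-\sigma)/2}$, while the second is $\ll|X(s)|\sum_{n\le y}n^{\sigma-1}\ll(q|t|)^{1/2-\sigma}\,y^{\sigma}\asymp(q|t|)^{(1-\sigma)/2}$. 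Combining with the primitive reduction gives $L(s,\chi)\ll(a|t|)^{(1-\sigma)/2+\varepsilon}$.

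The step I expect to be most delicate is keeping the dependence on the modulus uniform: one must carry $q$ (hence $a$) correctly through Stirling's bound for $X(s)$ and through the balanced cut-offs $x=y=(q|t|/2\pi)^{1/2}$, so that the conductor enters as $(a|t|)^{(1-\sigma)/2}$ and not through a larger power of $a$. The exponent's $\varepsilon$ is there precisely to absorb the logarithmic losses that the trivial bounds generate: the $\log x$ at $\sigma=1$, the factor $(1-\sigma)^{-1}$ in $\sum_{n\le x}n^{-\sigma}$ as $\sigma\to1$ (where one simply reverts to the $\sigma\ge1$ estimate on a short interval), and the divisor-type factor $a^{\varepsilon}$ from the imprimitivity correction.
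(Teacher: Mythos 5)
Your proposal is correct, but both halves of it take a genuinely different route from the paper. For $\sigma\ge 1$ the paper simply cites Lemma 10.15 of Montgomery--Vaughan, whereas you give a self-contained Abel-summation argument: bounded partial sums $|A(u)|\le\phi(a)\le a$ for a non-principal character, cutoff $X=a(2+|t|)$, head $\ll\log X$, tail $\ll X^{1-\sigma}\le 1$; this checks out and is a more elementary substitute. For $\tfrac12\le\sigma\le 1$ the paper never uses an approximate functional equation: it quotes Corollary 10.10 of MV to get $L(s,\chi^*)\ll(a|t|)^{1/2-\sigma}\log(a(2+|t|))$ for $\sigma\le 0$ and then interpolates by the Phragm\'en--Lindel\"of convexity principle to obtain $(a|t|)^{\frac{1-\sigma}{2}+\varepsilon}$ on $0\le\sigma\le 1$, finishing with exactly the imprimitivity reduction $L(s,\chi)=L(s,\chi^*)\prod_{p\mid a,\,p\nmid q}\bigl(1-\chi^*(p)p^{-s}\bigr)\ll |L(s,\chi^*)|\,2^{\omega(a)}$ that you also use. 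Your route through the approximate functional equation with balanced cutoffs $x=y=(q|t|/2\pi)^{1/2}$ yields the same exponent, and your bookkeeping (Stirling giving $|X(s)|\asymp(q|t|)^{1/2-\sigma}$, the $\varepsilon$ absorbing the $\log$ and $2^{\omega(a)}$ losses) is right. The trade-off is in the inputs: the paper's convexity argument rests only on a boundary estimate plus a standard interpolation principle, both quotable from MV, while your argument requires the approximate functional equation for Dirichlet $L$-functions \emph{uniform in the conductor}, which is a deeper theorem and, contrary to your pointer, is not in Titchmarsh (Theorem 4.15 there treats $\zeta$ only); you would need to cite, e.g., Lavrik's approximate functional equation or prove the expansion with an admissible error term. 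Since in the end you only extract the convexity-quality bound, the heavier tool buys nothing here. One shared caveat: both proofs (and the lemma as stated) should be read with $|t|$ bounded away from $0$, which is harmless since the lemma is applied with $|t|=T$ large.
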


\begin{proof}
The first part follows from Lemma 10.15 of MV \cite{MV}.  Now suppose that $\chi$ is primitive.  Then by Corollary 10.10 of MV \cite{MV}, 
$$L(s,\chi)\ll \left(a|t|\right)^{\frac12-\sigma}\log (a(2+|t|))$$
when $\sigma\le 0$.  Then by the convexity principle for Dirichlet series, for example as described in Titchmarsh \cite{ti} (cf. exercise 10.1.19 of MV \cite{MV}),
$$L(s,\chi)\ll \left(a|t|\right)^{\frac{1-\sigma}{2}+\varepsilon}$$
when $0\le \sigma\le 1$.  The proof is completed by observing that if $\frac12\le\sigma\le 1$ and $\chi$ modulo 
$a$ is induced by the primitive character $\chi^*$ with conductor $q$, then 
$$L(s,\chi) = 
L(s,\chi^*)\prod_{\substack{p|a\\ p\nmid q}}(1-\chi^*(p)p^{-s}) \ll |L(s,\chi^*)|2^{\omega(a)}.$$
\end{proof}

\begin{lem}
\label{l4}
Let $T\geq 2$, then we have
$$\sum_{\substack{\chi\\ \bmod{a}}}\int_{-T}^T|L(\frac{1}{2}+it,\chi)|^2d t\ll \frac{\phi^2(a)}{a} T \log T.$$ 
\end{lem}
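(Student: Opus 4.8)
The plan is to bound the second moment by reducing each $L(\tfrac12+it,\chi)$ to a short Dirichlet polynomial via the approximate functional equation and then collapsing the sum over $\chi$ by orthogonality. First I would invoke the functional equation for $L(s,\chi)$ (as in Titchmarsh \cite{ti} and MV \cite{MV}) to write, for $\chi$ primitive modulo $q$ and $s=\tfrac12+it$,
$$L(\tfrac12+it,\chi)=\sum_{n\le X}\frac{\chi(n)}{n^{1/2+it}}+\varepsilon(\chi,t)\sum_{n\le Y}\frac{\bar\chi(n)}{n^{1/2-it}}+O\big((q(2+|t|))^{-A}\big),$$
with $|\varepsilon(\chi,t)|=1$ and $XY\asymp q(2+|t|)$, so that both polynomials have length $\ll\sqrt{q(2+|t|)}\le\sqrt{a(2+|t|)}$ (a smooth cutoff renders the tail negligible). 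A general $\chi$ modulo $a$ is reduced to its inducing primitive character exactly as in the proof of Lemma \ref{l3}, the extra Euler factors $\prod_{p\mid a,\,p\nmid q}(1-\chi^*(p)p^{-s})$ being $O(2^{\omega(a)})$.

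Writing $|L|^2\ll|\text{main}|^2+|\text{dual}|^2$, the heart of the matter is $\sum_\chi\int_{-T}^T\big|\sum_{n\le X}\chi(n)n^{-1/2-it}\big|^2\,dt$. Expanding the square and carrying out the sum over characters first, orthogonality gives $\sum_{\chi\bmod a}\chi(m)\bar\chi(n)=\phi(a)$ when $m\equiv n\pmod a$ and $(mn,a)=1$, and $0$ otherwise, so the mean square equals
$$\phi(a)\sum_{\substack{m\equiv n\,(a)\\(mn,a)=1}}\frac{1}{\sqrt{mn}}\int_{-T}^{T}\Big(\frac nm\Big)^{it}\,dt.$$
The diagonal $m=n$ contributes $\ll T\phi(a)\sum_{(n,a)=1,\,n\le X}\tfrac1n\ll T\phi(a)\cdot\tfrac{\phi(a)}{a}\log X$, and with $X\asymp\sqrt{aT}$ this already produces the claimed order $\ll\tfrac{\phi^2(a)}{a}T\log(aT)\ll\tfrac{\phi^2(a)}{a}T\log T$ in the range of $a$ where the lemma is applied.

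For the off-diagonal terms $m\equiv n\pmod a$ with $m\ne n$ one has $|m-n|\ge a$, hence $|\log(n/m)|\gg a/X\asymp\sqrt{a/T}$; using $\int_{-T}^T(n/m)^{it}\,dt\ll|\log(n/m)|^{-1}$ (equivalently the Montgomery--Vaughan mean value theorem of \cite{MV}) this contribution is $\ll\phi(a)\sqrt{T/a}$, of smaller order than the main term. The dual polynomial is handled identically after the same orthogonality step.

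The main obstacle is that the length $Y$ of the dual polynomial, and the cutoff length generally, depends on the conductor $q$ of $\chi$ rather than on $a$, so the sum over $\chi$ cannot be performed uniformly in a single step. I would resolve this by organising the characters according to their conductor $q\mid a$, applying orthogonality within each class (where the length is constant) and then summing over $q$; the bookkeeping reduces to verifying the multiplicative identity $\sum_{q\mid a}\phi^*(q)\tfrac{\phi(q)}{q}\asymp\tfrac{\phi^2(a)}{a}$, where $\phi^*(q)$ counts primitive characters modulo $q$, together with control of the imprimitive Euler factors. Handling the $t$-dependence of the cutoff length by a dyadic decomposition of $[-T,T]$, so that the block $|t|\asymp T$ dominates the geometric sum, then completes the argument.
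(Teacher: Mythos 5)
The paper does not actually prove this lemma (it is quoted from Montgomery's book), so the comparison here is with the standard proof in the literature, whose skeleton is exactly the one you propose: approximate functional equation, orthogonality, diagonal versus off-diagonal. Your diagonal computation is correct and is what produces the main term, and your off-diagonal estimate (using $|m-n|\ge a$ and $\int_{-T}^{T}(n/m)^{it}\,dt\ll|\log(n/m)|^{-1}$) is dominated by the diagonal, as you say. One remark on the logarithm: no argument of this type will yield $\log T$ in place of $\log(aT)$, and in fact the lemma as stated cannot hold uniformly in $a$ --- for $a$ a large prime and $T=2$ the left-hand side is already $\gg\frac{\phi^2(a)}{a}\log a$ by the known asymptotics for $\sum_{\chi}|L(\tfrac12,\chi)|^2$ --- so your caveat ``in the range where the lemma is applied'' is the honest reading, and it is harmless for the paper since there $T=M^{5}\ge a^{5}$.

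The genuine gap is the treatment of imprimitive characters, which you correctly flag as the main obstacle but do not resolve. Bounding the Euler factors $\prod_{p\mid a,\,p\nmid q}(1-\chi^*(p)p^{-s})$ pointwise by $O(2^{\omega(a)})$ is fatal to the stated bound: after squaring, the loss is $\prod_{p\mid a}(1+p^{-1/2})^{2}\le 4^{\omega(a)}$, which is not $O(1)$ --- for $a$ with many small prime factors it exceeds any fixed power of $\log(aT)$ --- and this loss would propagate into Theorem \ref{t1}, weakening the error term \eqref{e14} by a factor $a^{o(1)}$. Your fallback, grouping characters by conductor, does not escape this: the approximate functional equation produces polynomials in $\chi^*(n)$, supported on $(n,q)=1$, whereas the lemma concerns $L(s,\chi)$, whose coefficients live on $(n,a)=1$, so converting one into the other reintroduces exactly the same Euler factors. (A separate, fixable point: primitive characters modulo $q$ do not by themselves satisfy a clean orthogonality relation; since the integrand is nonnegative one can enlarge the sum to all characters modulo $q$.) The loss-free argument must keep the Euler factors inside the average rather than bound them pointwise: expand $\prod_{p\mid a,\,p\nmid q}(1-\chi^*(p)p^{-s})=\sum_{d\mid r_q}\mu(d)\chi^*(d)d^{-s}$ with $r_q=\prod_{p\mid a,\,p\nmid q}p$, absorb this short polynomial into the Dirichlet polynomial before squaring; the diagonal then carries the harmless factor $\prod_{p\mid a,\,p\nmid q}(1+\tfrac1p)$ instead of $\prod_{p}(1+p^{-1/2})^{2}$, and the sum over conductors closes via $\sum_{q\mid a}\phi^{*}(q)\,\frac{\phi(q)}{q}\prod_{p\mid a,\,p\nmid q}\bigl(1+\tfrac1p\bigr)\asymp\frac{\phi^{2}(a)}{a}$, which is the corrected form of the identity you state. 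As written, your argument proves the lemma only up to an unbounded factor $4^{\omega(a)}$.
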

A proof of this lemma can be found for example in Montgomery \cite{mo}.

\begin{lem}\label{l6} 
Let $a$ be a positive integer greater than 1 and $w>0$, we have
$$\sum_{\substack{n\leq w\\n\equiv-1\bmod{a}}}\frac1n=\frac1a\left(\log w-\frac{\Gamma'{(\frac{a-1}a)}}{\Gamma(\frac{a-1}a)}-\log a\right)+O(1/w).$$

\end{lem}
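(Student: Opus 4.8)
The plan is to reduce the congruence-restricted sum to a finite sum of the shape $\sum_{k=1}^{K} 1/(k - 1/a)$ and then recognize it through the digamma function $\psi = \Gamma'/\Gamma$. First I would parametrize the summation set: a positive integer $n \le w$ with $n \equiv -1 \pmod a$ is exactly $n = ak - 1$ for $k = 1, 2, \ldots, K$, where $K = \lfloor (w+1)/a \rfloor$. Writing $\frac{1}{ak-1} = \frac1a \cdot \frac{1}{(k-1) + \frac{a-1}{a}}$, the sum becomes $\frac1a \sum_{j=0}^{K-1} \frac{1}{j + \frac{a-1}{a}}$.

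The key identity is the telescoping consequence of the functional equation $\psi(x+1) = \psi(x) + 1/x$, namely $\sum_{j=0}^{K-1} \frac{1}{j+x} = \psi(K+x) - \psi(x)$. Applying this with $x = \frac{a-1}{a}$ gives the exact expression $\frac1a\big[\psi(K + \frac{a-1}a) - \psi(\frac{a-1}a)\big]$, in which the constant $\psi(\frac{a-1}a) = \Gamma'(\frac{a-1}a)/\Gamma(\frac{a-1}a)$ already appears precisely as required by the statement.

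It remains to extract the asymptotics of $\psi(K + \frac{a-1}a)$. Here I would invoke the standard asymptotic expansion $\psi(s) = \log s + O(1/s)$, valid for $s \ge 1$ with an absolute implied constant. Setting $s = K + \frac{a-1}a$ and using $(w+1)/a - 1 < K \le (w+1)/a$ from the definition of the floor, one checks that $w/a < s \le w/a + 1$, whence $\log s = \log(w/a) + \log\!\big(1 + O(a/w)\big) = \log w - \log a + O(a/w)$ and therefore $\psi(s) = \log w - \log a + O(a/w)$. Multiplying by the outer factor $1/a$ converts this $O(a/w)$ into the claimed $O(1/w)$, and the formula follows.

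The computation is essentially routine once the digamma identity is in place; the only point requiring care is the bookkeeping of the error, ensuring that the discrepancy between $\log s$ and $\log(w/a)$ coming from the floor, together with the $O(1/s)$ tail of the digamma expansion, is genuinely of size $O(a/w)$ so that the $1/a$ prefactor yields $O(1/w)$. Should a proof avoiding the digamma function be preferred, one could instead apply Abel summation to $\sum 1/(k-1/a)$ and compare with $\int dt/t$, but identifying the resulting constant still forces the value $\psi(\frac{a-1}a)$, so the route above is the most direct.
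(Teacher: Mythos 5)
Your proof is correct, and it takes a genuinely different route from the paper's. The paper argues by Abel summation: it converts the sum into $\frac1a\left(\log w+2-\int_1^\infty a\left\{\frac{t+1}{a}\right\}\frac{dt}{t^2}\right)+O(1/w)$, evaluates the fractional-part integral by expanding $\psi'(z)=\sum_{k\ge 0}(z+k)^{-2}$ and computing $a\{\frac{r+1}{a}\}-\{r\}-1$ piecewise on $[0,a)$, and then needs two further classical facts to assemble the constant: the distribution relation $\sum_{l=1}^{a}\psi(l/a)=-a(\log a+\gamma)$ and the representation $\gamma=2-\int_1^\infty(\{t\}+1)t^{-2}\,dt$. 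You instead parametrize the progression exactly ($n=ak-1$, $1\le k\le K=\lfloor(w+1)/a\rfloor$), telescope with $\psi(x+1)=\psi(x)+1/x$ to obtain the exact identity $\frac1a\left[\psi\left(K+\tfrac{a-1}{a}\right)-\psi\left(\tfrac{a-1}{a}\right)\right]$, and finish with the Stirling-type asymptotic $\psi(s)=\log s+O(1/s)$; the constant $\psi(\tfrac{a-1}{a})$ appears immediately and exactly, the $-\log a$ falls out of $\log(w/a)$, and $\gamma$ never needs to enter. Your error bookkeeping is sound: $w/a<K+\tfrac{a-1}{a}\le w/a+1$ gives $\log s=\log w-\log a+O(a/w)$ and $1/s=O(a/w)$, and the prefactor $1/a$ converts both into $O(1/w)$. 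The one point you leave implicit is the degenerate range $w<a-1$, where $K=0$, the sum is empty, and the asymptotic for $\psi(s)$ is never invoked; there the claim reduces to $\log(a/w)+O(1)\ll a/w$, which holds trivially since $a/w>1$ --- worth a sentence for completeness, though the paper glosses over the same case and the lemma is only ever applied with $w\to\infty$.
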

\begin{proof}
By Abel summation, the left hand side above is
\begin{eqnarray*}
\sum_{\substack{n\leq w\\n\equiv-1\bmod{a}}}\frac1n
&=&\left\lfloor\frac{w+1}a \right\rfloor\frac1w+\int_1^{w}\left\lfloor\frac{t+1}a \right\rfloor\frac1{t^2}d t\\
&=&\frac1a+\int_1^{w}\frac{t+1}{a t^2}d t-\int_1^{w}\left\{\frac{t+1}{a}\right\}\frac{d t}{t^2}+O(1/w)\\
&=&\frac1a\left(\log w+2-\int_1^\infty a\left\{\frac{t+1}{a}\right\}\frac{d t}{t^2}\right)+O(1/w).
\end{eqnarray*}
Recall that the digamma function $\psi(z)$ is defined as $\frac{\Gamma'}{\Gamma}(z)$, and $\psi'(z)$ has a series expansion $\sum_{k=0}^{\infty}\frac1{(z+k)^2}$. So 
\begin{eqnarray}
\int_1^\infty \left(a\left\{\frac{t+1}{a}\right\}-\{t\}-1\right)\frac{d t}{t^2}
&=&\sum_{h=0}^{\infty}\int_0^a\left(a\left\{\frac{r+1}{a}\right\}-\{r\}-1\right)\frac{d r}{(ah+r)^2} \nonumber \\
&=&\frac1{a^2}\int_0^a\left(a\left\{\frac{r+1}{a}\right\}-\{r\}-1\right)\psi'\left(\frac{r}{a}\right)d r \nonumber \\
&&\label{e15}
\end{eqnarray}
Notice that 
\[a\left\{\frac{r+1}{a}\right\}-\{r\}-1=\left\{
\begin{array}{ccc}
0, &\mbox{if}& 0\le r<1\\
1, &\mbox{if}& 1\le r<2\\
\vdots & &\vdots\\
a-2, &\mbox{if}&a-2\le r<a-1\\
-1, &\mbox{if}& a-1\le r<a
\end{array}
\right.
\]
Hence \eqref{e15} is equal to
\begin{eqnarray*}
&&\frac1{a^2}\left(\sum_{l=1}^{a-2}l\int_{0}^1\psi'\left(\frac{l+r}{a}\right)d r-\int_0^1\psi'\left(\frac{a-1+r}{a}\right)d r\right)\\
&=&\frac1a\left(\sum_{l=1}^{a-2}l\left(\psi\left(\frac{l+1}a\right)-\psi\left(\frac{l}a\right)\right)-\left(\psi(1)-\psi\left(\frac{a-1}{a}\right)\right)\right)\\
&=&\psi\left(\frac{a-1}a\right)-\frac1a\sum_{l=1}^{a}\psi\left(\frac{l}a\right)\\
&=&\psi\left(\frac{a-1}a\right)+\log a+\gamma
\end{eqnarray*}
The last equality follows from a well known property of the digamma function $\psi$. Now the lemma is established after the observation 
$\gamma=2-\int_1^{\infty}\frac{\{t\}+1}{t^2}d t$.
\end{proof}

\begin{lem}
\label{l5}
Let $a$ be a positive integer greater than 1, then we have
$$\frac1{\phi(a)}\sum_{\substack{\chi\not=\chi_0\\ \bmod a}} \Bar\chi(-1)L(1,\chi)=-\frac1{a}\left(\frac{\Gamma'{(\frac{a-1}a)}}{\Gamma(\frac{a-1}a)}+\sum_{p|a} \frac{\log p}{p-1}+\log a+\gamma\right) 
.$$
\end{lem}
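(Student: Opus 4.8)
The plan is to invoke the orthogonality of Dirichlet characters to collapse the sum over $\chi$ into a single congruence condition, and then evaluate the resulting arithmetic sums using Lemma \ref{l6} together with the asymptotic for the reciprocal sum over integers coprime to $a$. Since $\chi(-1)=\pm 1$ is real we have $\bar\chi(-1)\chi(n)=\chi(-1)\chi(n)=\chi(-n)$, so for any $w>0$,
$$\sum_{\chi\bmod a}\bar\chi(-1)\sum_{n\le w}\frac{\chi(n)}{n}=\sum_{n\le w}\frac1n\sum_{\chi\bmod a}\chi(-n)=\phi(a)\sum_{\substack{n\le w\\ n\equiv-1\bmod a}}\frac1n,$$
where the last step uses $\sum_{\chi\bmod a}\chi(m)=\phi(a)$ when $m\equiv1\pmod a$ and $0$ otherwise, noting that $n\equiv-1\pmod a$ forces $\gcd(n,a)=1$ automatically.

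Next I would isolate the principal character. Since $\chi_0(-1)=1$ and $\sum_{n\le w}\chi_0(n)/n=\sum_{\substack{n\le w\\\gcd(n,a)=1}}1/n$, rearranging the identity above gives
$$\sum_{\substack{\chi\neq\chi_0\\\bmod a}}\bar\chi(-1)\sum_{n\le w}\frac{\chi(n)}{n}=\phi(a)\sum_{\substack{n\le w\\ n\equiv-1\bmod a}}\frac1n-\sum_{\substack{n\le w\\\gcd(n,a)=1}}\frac1n.$$
For non-principal $\chi$ the partial sums $\sum_{n\le w}\chi(n)/n$ converge to $L(1,\chi)$ as $w\to\infty$, so the left side tends to the quantity we want. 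On the right, Lemma \ref{l6} handles the first sum. For the second I would use M\"obius inversion, $\sum_{\substack{n\le w\\\gcd(n,a)=1}}1/n=\sum_{d\mid a}(\mu(d)/d)\sum_{m\le w/d}1/m$, together with $\sum_{m\le x}1/m=\log x+\gamma+O(1/x)$, to obtain
$$\sum_{\substack{n\le w\\\gcd(n,a)=1}}\frac1n=\frac{\phi(a)}{a}\Big(\log w+\gamma+\sum_{p\mid a}\frac{\log p}{p-1}\Big)+o(1).$$

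The one genuinely computational point is the constant $\sum_{p\mid a}\log p/(p-1)$, which arises from $-\sum_{d\mid a}(\mu(d)\log d)/d$; this equals the value at $s=1$ of $-\frac{d}{ds}\prod_{p\mid a}(1-p^{-s})$, and the logarithmic derivative of the Euler product $\prod_{p\mid a}(1-p^{-s})$ yields $\frac{\phi(a)}{a}\sum_{p\mid a}\log p/(p-1)$ there. Substituting both asymptotics, the $\log w$ terms cancel exactly (as they must, since the left side converges), and letting $w\to\infty$ gives
$$\sum_{\substack{\chi\neq\chi_0\\\bmod a}}\bar\chi(-1)L(1,\chi)=-\frac{\phi(a)}{a}\Big(\psi\big(\tfrac{a-1}{a}\big)+\sum_{p\mid a}\frac{\log p}{p-1}+\log a+\gamma\Big),$$
which is the claim after dividing by $\phi(a)$ and writing $\psi=\Gamma'/\Gamma$. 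I do not expect a serious obstacle: the only care needed is the conditional convergence of $L(1,\chi)$, which is handled by keeping $w$ finite throughout and passing to the limit only at the end, together with the bookkeeping of the coprimality constant through the Euler-product logarithmic derivative.
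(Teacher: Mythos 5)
Your proposal is correct and follows essentially the same route as the paper's proof: truncate $L(1,\chi)$ at height $w$, use orthogonality of characters to reduce the full character sum to $\sum_{n\le w,\ n\equiv-1 \bmod a}1/n$ (evaluated by Lemma \ref{l6}), subtract the principal-character (coprime) sum evaluated by M\"obius inversion, and let $w\to\infty$. The only blemish is a harmless sign typo in your side computation: $-\sum_{d\mid a}\mu(d)\log d/d$ equals $+\frac{d}{ds}\prod_{p\mid a}\left(1-p^{-s}\right)$ evaluated at $s=1$, not its negative, and this positive value $\frac{\phi(a)}{a}\sum_{p\mid a}\frac{\log p}{p-1}$ is indeed the one you correctly use in the final asymptotic.
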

\begin{proof}
Let $w$ be large compared to $a$ (eventually we will let $w$ goes to $\infty$).  Then for non-principal characters $\chi$ modulo $a$, by Abel summation 
$$L(1,\chi)=\sum_{n\le w}\frac{\chi(n)}n + O(a/w).$$ 
Hence
\begin{align*}
\frac1{\phi(a)}\sum_{\substack{\chi\not=\chi_0\\ \bmod a}}& \Bar\chi(-1)L(1,\chi)\\
&= \frac1{\phi(a)}\sum_{\substack{\chi\not=\chi_0\\ \bmod a}} \Bar\chi(-1)
\sum_{n\le w}\frac{\chi(n)}n
 +O(a/w).
\end{align*}
The main term on the right is
\begin{equation*}
\frac1{\phi(a)}\sum_{\substack{\chi\\ \bmod a}} \Bar\chi(-1)
\sum_{n\le w}\frac{\chi(n)}n
 - \frac1{\phi(a)}\sum_{\substack{n\le w\\(n,a)=1}}\frac1n.
\end{equation*}
We have
\begin{align*}
\sum_{\substack{n\le w\\(n,a)=1}}\frac1n&= \sum_{m|a}\frac{\mu(m)}m\sum_{n\le w/m}\frac1n\\
&=\sum_{m|a}\frac{\mu(m)}m\Big(\log(w/m)+\gamma+O(m/w)\Big)\\
&=\frac{\phi(a)}a\Bigg(\log w+\sum_{p|a}\frac{\log p}{p-1}+\gamma\Bigg) +O\Big(d(a)/w\Big).
\end{align*}
Here we are using the fact that $-\sum_{m|a}\frac{\mu(m)}m\log m=\frac{\phi(a)}a\sum_{p|a}\frac{\log p}{p-1}$, 
this is because
\begin{eqnarray*}
-\sum_{m|a}\frac{\mu(m)}m\log m&=&\sum_{p|a}\frac{\log p}{p}\sum_{\substack{k|a/p\\(p,k)=1}}\frac{\mu(k)}{k}\\
&=&\sum_{p|a}\frac{\log p}{p}\prod_{\substack{p'|a\\p'\not=p}}\left(1-\frac{1}{p'}\right)\\
&=&\sum_{p|a}\frac{\log p}{p}\left(\frac1{1-\frac{1}{p}}\right)\prod_{p'|a}\left(1-\frac{1}{p'}\right)\\
&=&\frac{\phi(a)}a\sum_{p|a}\frac{\log p}{p-1}.
\end{eqnarray*}
On the other hand, we have
$$\frac1{\phi(a)}\sum_{\substack{\chi\\ \bmod a}} \Bar\chi(-1)
\sum_{n\le w}\frac{\chi(n)}n=\sum_{\substack{n\leq w\\n\equiv-1\bmod{a}}}\frac1n$$
And by lemma \ref{l6}, this is
$$\frac1a\left(\log w-\frac{\Gamma'{(\frac{a-1}a)}}{\Gamma(\frac{a-1}a)}-\log a\right)+O(1/w).$$
Thus we have shown that
\begin{eqnarray*}
& &\frac1{\phi(a)}\sum_{\substack{\chi\not=\chi_0\\ \bmod a}} \Bar\chi(-1)L(1,\chi)\\
&=& \frac{1}{a}\Big(\log w-\frac{\Gamma'{(\frac{a-1}a)}}{\Gamma(\frac{a-1}a)}-\log a\Big)-\frac1a\Bigg(\log w+\sum_{p|a}\frac{\log p}{p-1}+\gamma\Bigg) +O(a/w)\\
&=&-\frac1{a}\left(\frac{\Gamma'{(\frac{a-1}a)}}{\Gamma(\frac{a-1}a)}+\sum_{p|a} \frac{\log p}{p-1}+\log a+\gamma\right)+O(a/w)
\end{eqnarray*}
Now the lemma is established when we let $w\rightarrow\infty$ in the above.
\end{proof}

\section{Proof of Theorem \ref{t1}}
\noindent 
The starting point of the proof is the following observation. One can rewrite equation (\ref{e1}) in the following form
\begin{equation}
\label{e2}
(ax-1)(ay-1)=an+1.
\end{equation}
Namely we are going to count the following quantities, 
$$R_a(n)=\mathrm{Card}\left\{(x,y)\in\n^2:(ax-1)(ay-1)=an+1\right\}$$  
and
$$S_a(N)=\sum_{0\leq n\leq N}R_a(n).$$

After the change of variables $u=ax-1$ and $v=ay-1$, it follows that $R_a(n)$ is the number of ordered pairs of natural numbers $u$, $v$ such that $uv=an+1$ and $u\equiv v\equiv -1\pmod{a}$.\par
Now the residue class $u\equiv -1 \pmod{a}$ and $v\equiv -1 \pmod{a}$ are readily isolated {\it via} the orthogonality of the Dirichlet characters $\chi$ modulo $a$.  Thus we have
\begin{eqnarray*}
&&S_a(N)\\
&=&\sum_{0\leq n\leq N}\sum_{\substack{ uv=an+1\\u\equiv -1\bmod{a}\\v\equiv -1 \bmod{a}}}1\\
&=&\sum_{m\leq M}\sum_{\substack{uv=m\\u\equiv -1\bmod{a}\\v\equiv -1 \bmod{a}}}1\\
&=&\frac{1}{\phi^2(a)}\sum_{\substack{\chi_1\\\bmod{a}}}\sum_{\substack{\chi_2\\\bmod{a}}} \bar\chi_1(-1)\bar{\chi}_2(-1)\sum_{m\leq M}\sum_{uv=m}{\chi}_1(u)\chi_2(v),
\end{eqnarray*}
where $M=aN+1$.

Let 
\begin{equation*}
a_m(\chi_1,\chi_2)=\displaystyle\sum_{uv=m}{\chi}_1(u)\chi_2(v).
\end{equation*} 
Then we have
\begin{equation*} 
S_a(N)=\frac{1}{\phi^2(a)}\sum_{\substack{\chi_1\\\bmod{a}}}\sum_{\substack{\chi_2\\\bmod{a}}} \bar\chi_1(-1)\bar{\chi}_2(-1)\sum_{m\leq M}a_m(\chi_1,\chi_2).
\end{equation*}
We analyze this expression through the properties of the Dirichlet series 
\begin{equation} \label{e5}
f_{\chi_1,\chi_2}(s)=\displaystyle\sum_{m=1}^{\infty}\frac{a_m(\chi_1,\chi_2)}{n^s}=L(s,\chi_1)L(s,\chi_2).
\end{equation}
This affords an analytic continuation of $f_{\chi_1,\chi_2}$ to the whole complex plane.

By a quantitative version of Perron's formula, as in Theorem 5.2 of MV \cite{MV} for example,  we obtain

$$\sideset{}{'}\sum_{m\leq M}a_m(\chi_1,\chi_2)=\frac{1}{2\pi i}\int_{\sigma_0-iT}^{\sigma_0+iT}f_{\chi_1,\chi_2}(s)\frac{M^s}{s}ds+R(\chi_1,\chi_2),$$
where $\sigma_0>1$ and 
\begin{eqnarray*}
R(\chi_1,\chi_2)\ll&\sum_{\substack{\frac{M}{2}<m<2M\\m\neq M}}|a_m(\chi_1,\chi_2)|\min\left(1,\frac{M}{T|m-M|}\right)\\
&+\frac{4^{\sigma_0}+M^{\sigma_0}}{T}\sum_{m=1}^{\infty}\frac{|a_m(\chi_1,\chi_2)|}{m^{\sigma_0}}.
\end{eqnarray*}
Here $\sideset{}{'}\sum$ means that when $M$ is an integer, the term $a_M(\chi_1,\chi_2)$
is counted with weight $\frac{1}{2}$.

Let $\sigma_0=1+\frac{1}{\log M}$.  By (\ref{e5}) we have $|a_m(\chi_1,\chi_2)|\le d(n)$.  Thus
$$\sum_{m=1}^{\infty}\frac{|a_m(\chi_1,\chi_2)|}{n^{\sigma_0}}\ll \zeta(\sigma_0)^2\ll (\log B)^2$$
and so $R(\chi_1,\chi_2)\ll_\varepsilon M^{1+\varepsilon}T^{-1}$, for any
$\varepsilon>0$.  Hence
$$\sum_{m\leq M}a_m(\chi_1,\chi_2)=\frac{1}{2\pi i}\int_{\sigma_0-iT}^{\sigma_0+iT}f_{\chi_1,\chi_2}(s)\frac{M^s}{s}ds+O\left(\left(\frac{M}{T}+1\right)M^\varepsilon\right).$$
The error term here is
$$\ll M^{\varepsilon}$$
provided that
$$T\ge M.$$
The integrand is a meromorphic function in the complex plane and is analytic for all $s$ with $\Re s\ge\frac12$ except for a possible pole of finite order at $s=1$.  Suppose that $T\ge4$.  By the residue theorem we have
\begin{align*}
&\frac{1}{2\pi i}\int_{\sigma_0-iT}^{\sigma_0+iT}f_{\chi_1,\chi_2}(s)\frac{M^s}{s}ds\\
=&\frac{1}{2\pi i}\left(\int_{\sigma_0-iT}^{\frac{1}{2}-iT}+\int_{\frac{1}{2}-iT}^{\frac{1}{2}+iT}+\int_{\frac{1}{2}+iT}^{\sigma_0+iT}\right)
\frac{L(s,\chi_1)L(s,\chi_2) M^s}{s}ds\\
&+\text{Res}_{s=1}\left(L(s,\chi_1)L(s,\chi_2)\frac{M^s}{s}\right).
\end{align*}

Hence, by Lemmas 1, 2 and 3, the contribution from the horizontal paths is 
\begin{align*}
&\ll (\log aT)^2\frac{M}{T\log M} +\frac{(aT)^{\varepsilon}}{T}\int_{1/2}^1 (aT)^{1-\sigma}M^{\sigma}d\sigma\\
&\ll T^{-1}(aT)^{\varepsilon}M+T^{-1}(aT)^{1/2+\varepsilon}M^{1/2}
\end{align*}
and provided that $T\ge M^{5}$ this is 
$$\ll M^{-1}.$$
\par
On the other hand, the contribution from the vertical path on the right is bounded by
$$M^{\frac12}\sum_{2^k\le T}2^{-k}\int_{2^{k}}^{2^{k+1}}\textstyle|L(\frac12+it,\chi_1)L(\frac12+it,\chi_2)|dt.$$
And by Lemma 4 
\begin{align*}\sum_{\substack{\chi_1,\chi_2\\ \bmod a}}\bar\chi_1(-1)\bar\chi_2(-1) \frac{1}{2\pi i} & \int_{\frac{1}{2}-iT}^{\frac{1}{2}+iT} 
\frac{L(s,\chi_1)L(s,\chi_2) M^s}{s}ds\\
& \ll M^{\frac12}\sum_{2^k\le T} 2^{-k}\int_{2^{k}}^{2^{k+1}}\bigg(\sum_{\substack{\chi\\\bmod a}}\textstyle|L(\frac12+it,\chi)|\bigg)^2dt\\
& \ll M^{\frac12}\sum_{2^k\le T} 2^{-k}\phi(a)\sum_{\substack{\chi\\ \bmod a}}\int_{-2^{k+1}}^{2^{k+1}}\textstyle|L(\frac12+it,\chi)|^2dt\\
& \ll M^{\frac12}\sum_{2^k\le T} \frac{\phi^3(a)}{a} k\\
& \ll \frac{\phi^3(a)}{a} M^{\frac12}(\log M)^2
\end{align*}
on taking 
$$T=M^{5}.$$
Hence we obtain
\begin{align*}
S_a(N)=\frac{1}{\phi^2(a)}\sum_{\substack{\chi_1\\ \mod a}}\sum_{\substack{\chi_2\\ \mod a}}\bar\chi_1(-1)\bar\chi_2(-1)\text{Res}_{s=1}\left(f_{\chi_1,\chi_2}(s)\frac{M^s}{s}\right)
 + \Delta_a(N)
\end{align*}
where
\begin{equation}
\Delta_a(N)\ll \frac{\phi(a)}a\sqrt{M}(\log M)^2\ll{\phi(a)}\sqrt{\frac{N}a}\Big(\log (aN)\Big)^2.
\end{equation}
It remains to compute the residue at $s=1$.\par

By (\ref{e5}) there are naturally two cases, namely
\begin{enumerate}[(i)] 
\item $\chi_1=\chi_2=\chi_0$;
\item only one of $\chi_1$ and $\chi_2$ is equal to $\chi_0$ while the other one is equal to $\chi\neq\chi_0$. 
\end{enumerate}
In the latter case the integrand has a simple pole at $s=1$ and the residue is
$$\prod_{p|a}\left(1-\frac1p\right)L(1,\chi)(a N+1)=\phi(a)L(1,\chi) N+\frac{\phi(a)}{a}L(1,\chi).$$
By lemma \ref{l5}, the sum over $\chi$ for the second term above is small, hence can be absorbed in $\Delta_a(N)$.
While in the former case, the integrand has a double pole at $s=1$ and the residue is 
$$\prod_{p|a}\left(1-\frac1p\right)^2\Big(M\log M-M\Big).$$
Hence we have shown that
\begin{eqnarray*} 
S_a(N)=&\frac{1}{a^2}\Big((aN+1)\log (aN+1)-aN-1\Big)\\
+&\displaystyle\Big(\frac2{\phi(a)}\sum_{\substack{\chi\neq\chi_0\\\mod a}}\bar\chi(-1)L(1,\chi)\Big) N+\Delta_a(N).
\end{eqnarray*}
Now by lemma \ref{l5}, this is
$$\frac1a\Big(N\log N-C(a)N\Big)+\Delta_a(N)$$
where $C(a)$ and $\Delta_a(N)$ are given by \eqref{e13} and \eqref{e14} respectively.

This completes the proof of Theorem \ref{t1}.

\hfill $\square$

\end{document}